\newtheorem{theorem}{Theorem}[section]
\theoremstyle{remark}
\theoremstyle{definition}
\def\R{\mathbb{R}}
\renewcommand{\leq}{\leqslant}
\begin{document}

\title{An optimal control approach to malaria prevention\\
\emph{via} insecticide-treated nets\footnote{Submitted 10-May-2013;
accepted after minor revision 09-June-2013;
Conference Papers in Mathematics, Volume 2013, Article ID 658468.
http://dx.doi.org/10.1155/2013/658468}}

\author{Cristiana J. Silva\\
\texttt{cjoaosilva@ua.pt}
\and
Delfim F. M. Torres\footnote{Corresponding author.
Tel.: +351 234370668; fax: +351 234370066.}\\
\texttt{delfim@ua.pt}}

\date{CIDMA -- Center for Research and Development in Mathematics and Applications,\\
Department of Mathematics, University of Aveiro, 3810-193 Aveiro, Portugal}

\maketitle


\begin{abstract}
Malaria is a life threatening disease, entirely preventable and treatable,
provided the currently recommended interventions are properly implemented.
These interventions include vector control through the use
of insecticide-treated nets (ITNs). However, ITN possession does not necessarily
translate into use. Human behavior change interventions, including information,
education, communication (IEC) campaigns and post-distribution
hang-up campaigns are strongly recommended. In this paper we consider
a recent mathematical model for the effects of ITNs on the transmission
dynamics of malaria infection, which takes into account the human behavior.
We introduce in this model a \emph{supervision} control, representing
IEC campaigns for improving the ITN usage. We propose and solve
an optimal control problem where the aim is to minimize the number
of infectious humans while keeping the cost low. Numerical results are provided,
which show the effectiveness of the optimal control interventions.
\end{abstract}

\paragraph{Keywords:} optimal control, malaria, insecticide-treated nets.

\paragraph{Mathematics Subject Classification 2010:} 92D30; 49M05.


\section{Introduction}

Malaria is a life threatening disease caused by \emph{Plasmodium} parasites
and transmitted from one individual to another by the bite of infected
female anopheline mosquitoes \cite{Agusto:varattract:2013,Teboh:etall:2013}.
In the human body, the parasites multiply in the liver, and then infect red blood cells.
Following World Health Organization (WHO) 2012 report, an estimated 3.3 billion people
were at risk of malaria in 2011, with populations living in sub-Saharan Africa
having the highest risk of acquiring malaria \cite{WHO:2012}. Malaria is an entirely
preventable and treatable disease, provided the currently recommended interventions are properly
implemented. Following WHO, these interventions include (i) vector control through
the use of insecticide-treated nets (ITNs), indoor residual spraying and,
in some specific settings, larval control, (ii) chemoprevention
for the most vulnerable populations, particularly pregnant
women and infants, (iii) confirmation of malaria diagnosis
through microscopy or rapid diagnostic tests for every
suspected case, and (iv) timely treatment with appropriate antimalarial
medicines \cite{WHO:2012}. An ITN is a mosquito net that repels,
disables and/or kills mosquitoes coming into contact
with insecticide on the netting material. ITNs are considered
one of the most effective interventions against malaria \cite{Lengeler:1996}.
In 2007, WHO recommended full ITN coverage of all people at risk of malaria,
even in high-transmission settings \cite{WHO:2007}.
By 2011, 32 countries in the African region and 78 other countries worldwide,
had adopted the WHO recommendation. A total of 89 countries, including 39 in Africa,
distribute ITNs free of charge. Between 2004 and 2010, the number of ITNs delivered
annually by manufacturers to malaria-endemic countries in sub-Saharan Africa
increased from 6 million to 145 million. However, the numbers delivered in 2011 and 2012
are below the number of ITNs required to protect all population at risk.
There is an urgent need to identify new funding sources
to maintain and expand coverage levels of interventions so that
outbreaks of disease can be avoided and international targets
for reducing malaria cases and deaths can be attained \cite{WHO:2012}.

A number of studies reported that ITN possession does not necessarily translate into use.
Human behavior change interventions, including information, education, communication (IEC)
campaigns and post-distribution hang-up campaigns are strongly recommended, especially
where there is evidence of their effectiveness in improving ITN usage
\cite{Afolabi_et_all:MJ:2009,Macintyre:2006,WHO:2012}.
In this paper we consider the model from \cite{Agusto_et_all:JTB:2013}
for the effects of ITNs on the transmission dynamics of malaria infection.
Other articles considered the impact of intervention strategies using ITN
(see, e.g., \cite{Killeen:2007,Smith:et:all:Paras:2008}). However,
only in \cite{Agusto_et_all:JTB:2013} the human behavior is incorporated into the model.
We introduce in the model of \cite{Agusto_et_all:JTB:2013} a \emph{supervision} control,
$u$, which represents IEC campaigns for improving the ITN usage. The reader interested
in the use of optimal control to infectious diseases is referred
to \cite{Rodrigues:Torres:2010,Silva:Torres:NACO2012} and references cited therein.
For the state of art in malaria research see \cite{Teboh:etall:2013}.

The text is organized as follows.
In Section~\ref{sec:cont:model} we present the mathematical model
for malaria transmission with one control function $u$. In Section~\ref{sec:ocp}
we propose an optimal control problem for the minimization of the number
of infectious humans while controlling the cost of control interventions.
Finally, in Section~\ref{sec:num:simu} some numerical results
are analyzed and interpreted from the epidemiological point of view.


\section{Controlled model}
\label{sec:cont:model}

We consider a mathematical model presented in \cite{Agusto_et_all:JTB:2013}
for the effects of ITN on the transmission of malaria infection and introduce
a time-dependent \emph{supervision} control $u$. The model considers transmission
of malaria infection of mosquito (also referred as vector)
and human (also referred as host) population. The host population is divided
into two compartments, susceptible ($S_h$) and infectious ($I_h$),
with a total population ($N_h$) given by $N_h = S_h + I_h$. Analogously,
the vector population is divided into two compartments, susceptible ($S_v$)
and infectious ($I_v$), with a total population ($N_v$) given by $N_v = S_v + I_v$.
The model is constructed under the following assumptions: all newborns individuals
are assumed to be susceptible and no infected individuals are assumed to come
from outside the community. The human and mosquito recruitment rates are denoted by
$\Lambda_h$ and $\Lambda_v$, respectively. The disease is fast progressing,
thus the exposed stage is minimal and is not considered. Infectious individuals
can die from the disease or become susceptible after recovery while
the mosquito population does not recover from infection. ITNs contribute
for the mortality of mosquitoes. The average number of bites per mosquito,
per unit of time (mosquito-human contact rate), is given by
\begin{equation*}
\beta = \beta_{max}(1 - b) \, ,
\end{equation*}
where $\beta_{max}$ denotes the maximum transmission rate and $b$
the proportion of ITN usage. It is assumed that the minimum transmission rate is zero.
The value of $\beta$ is the same for human and mosquito population,
so the average number of bites per human per unit of time is $\beta N_v/N_h$
(see \cite{Agusto_et_all:JTB:2013} and the references cited therein). Thus,
the force of infection for susceptible humans ($\lambda_h$)
and susceptible vectors ($\lambda_v$) are given by
\begin{equation*}
\lambda_h = \frac{p_1 \beta I_v}{N_h}
\quad \text{and} \quad \lambda_v = \frac{p_2 \beta I_h}{N_h} \, ,
\end{equation*}
where $p_1$ and $p_2$ are the transmission probability per bite from
infectious mosquitoes to humans, and from infectious humans to mosquitoes,
respectively. The death rate of the mosquitoes is modeled by
$\mu_{vb} = \mu_{v1}+ \mu_{max}b$, where $\mu_{v1}$ is the natural death rate
and $\mu_{max}b$ is the death rate due to pesticide on ITNs. The coefficient
$1-u$ represents the effort of susceptible humans that become infected
by infectious mosquitoes bites, such as educational programs/campaigns
for the correct use of ITNs, supervision teams that visit every house
in a certain region and assure that every person has access to an ITN,
know how to use it correctly, and recognize its importance on the reduction
of malaria disease transmission. The values of the parameters $\Lambda_h$,
$\Lambda_v$, $\mu_h$, $\delta_h$, $\gamma_h$, $\mu_{v1}$, $\mu_{max}b$,
$\beta_{max}$, $p_1$ and $p_2$ are taken from \cite{Agusto_et_all:JTB:2013}
(see Table~\ref{table:parameters}).

The state system of the controlled malaria model is given by
\begin{equation}
\label{model:malaria:controls}
\begin{cases}
\dot{S}_h(t) = \Lambda_h - (1-u(t)) \lambda_h  S_h(t)  + \gamma_h I_h(t) - \mu_h S_h(t) \, , \\[0.2 cm]
\dot{I}_h(t) = (1-u(t)) \lambda_h S_h(t) - (\mu_h + \gamma_h + \delta_h)I_h(t) \, , \\[0.2 cm]
\dot{S}_v(t) = \Lambda_v - \lambda_v S_v(t) - \mu_{vb} S_v(t) \, , \\[0.2 cm]
\dot{I}_v(t) = p_2 \lambda_v S_v(t) - \mu_{vb} I_v(t) \, .
\end{cases}
\end{equation}

\begin{table}[!htb]
\centering
\begin{tabular}{|l | l | l |}
\hline
{\scriptsize{Symbol}} & {\scriptsize{Description}}  & {\scriptsize{Value}} \\
\hline
{\scriptsize{$\Lambda_h$}} & {\scriptsize{Recruitment rate in humans}}
& {\scriptsize{$10^3/(70\times 365)$ }}\\
{\scriptsize{$\Lambda_v$}} & {\scriptsize{Recruitment rate in mosquitoes}}
& {\scriptsize{$10^4/21$}}\\
{\scriptsize{$\mu_h$}} & {\scriptsize{Natural mortality rate in humans}}
& {\scriptsize{$1/(70 \times 365)$}}\\
{\scriptsize{$\delta_h$}} & {\scriptsize{Disease induced mortality rate in humans }}
& {\scriptsize{$10^{-3}$}}\\
{\scriptsize{$b$}} & {\scriptsize{Proportion of treated net usage}}
& {\scriptsize{0.25; 0.3; 0.4; 0.5; 0.6; 0.7; 0.75}}\\
{\scriptsize{$\gamma_h$}} & {\scriptsize{Recovery rate of infectious humans to be susceptible}}
&{\scriptsize{$1/4$}}\\
{\scriptsize{$\mu_{v_1}$}} & {\scriptsize{Natural mortality rate of mosquitoes}}
& {\scriptsize{$1/21$}} \\
{\scriptsize{$\mu_{max}b$}} & {\scriptsize{Mortality rate of mosquitoes due to treated net}}
&  {\scriptsize{1/21}}\\
{\scriptsize{$\beta_{max}$}} & {\scriptsize{Maximum mosquito-human contact rate}}
&  {\scriptsize{$0.1$}}\\
{\scriptsize{$p_1$}} & {\scriptsize{Probability of disease transmission from mosquito}} & {\scriptsize{1}} \\
{\scriptsize{$p_2$}} & {\scriptsize{Probability of disease transmission from human to mosquito}} & {\scriptsize{$1$}}\\
{\scriptsize{$A_1$}} & {\scriptsize{Weight constant on infectious humans}} & {\scriptsize{$25$}}\\
{\scriptsize{$C$}} & {\scriptsize{Weight constant on control}} & {\scriptsize{$50$}}\\
{\scriptsize{$S_h(0)$}} & {\scriptsize{Susceptible individuals initial value}} & {\scriptsize{$800$}}\\
{\scriptsize{$I_h(0)$}} & {\scriptsize{Infectious individuals initial value}} & {\scriptsize{$200$}}\\
{\scriptsize{$S_v(0)$}} & {\scriptsize{Susceptible vectors initial value}} & {\scriptsize{$4000$}}\\
{\scriptsize{$I_v(0)$}} & {\scriptsize{Infectious vectors initial value}} & {\scriptsize{$900$}}\\
\hline
\end{tabular}
\caption{Parameter values.}
\label{table:parameters}
\end{table}

The rate of change of the total human and mosquito populations is given by
\begin{equation*}
\begin{split}
\dot{N}_h(t) &= \Lambda_h - \mu_h N_h(t) - \delta_h I_h(t) \, ,\\
\dot{N}_v(t) &= \Lambda_v - \mu_{vb} N_v(t)  \, .
\end{split}
\end{equation*}


\section{Optimal control problem}
\label{sec:ocp}

We formulate an optimal control problem that describes the goal and restrictions of the epidemic.
In \cite{Agusto_et_all:JTB:2013} it is found that
the ITN usage must attain 75\% ($b=0.75$) of the host population
in order to extinct malaria. Therefore, educational campaigns must continue
encouraging the population to use ITNs. Moreover, it is very important
to assure that ITNs are in good conditions
and each individual knows how to use them properly.
Having this in mind, we introduce a \emph{supervision} control function, $u$,
where the coefficient $1-u$ represents the effort to reduce
the number of susceptible humans that become infected by infectious
mosquitoes bites, assuring that ITNs are correctly used
by the fraction $b$ of the host population.

We consider the state system \eqref{model:malaria:controls}
of ordinary differential equations in $\R^4$
with the set of admissible control functions given by
\begin{equation*}
\Omega = \left\{ u(\cdot) \in L^\infty(0, t_f) \, | \,
0 \leq u(t) \leq 1, \, \, \forall t \in [0, t_f] \right\} \, .
\end{equation*}
The objective functional is given by
\begin{equation}
\label{cost:function:malaria:J1}
J_1(u) = \int_0^{t_f}  A_1 I_h(t) + \frac{C}{2} u^2(t)  \, dt \, ,
\end{equation}
where the weight coefficient, $C$, is a measure of the relative cost
of the interventions associated to the control $u$ and $A_1$ is the
weight coefficient for the class $I_h$. The aim is to minimize
the infectious humans while keeping the cost low. More precisely,
we propose the optimal control problem of determining
$(S_h^*, I_h^*, S_v^*, I_v^*)$ associated to an admissible control
$u^*(\cdot) \in \Omega$ on the time interval $[0, t_f]$, satisfying
\eqref{model:malaria:controls}, the initial conditions $S_h(0)$, $I_h(0)$,
$S_v(0)$ and $I_v(0)$ (see Table~\ref{table:parameters}) and minimizing
the cost function \eqref{cost:function:malaria:J1}, i.e.,
\begin{equation}
\label{min:cost:function:malaria}
J_1(u^*(\cdot)) = \min_{\Omega} J_1(u(\cdot)) \, .
\end{equation}
The existence of an optimal control $u^*(\cdot)$ comes from the convexity
of the Lagrangian of \eqref{cost:function:malaria:J1} with respect to the control
and the regularity of the system \eqref{model:malaria:controls} (see, e.g.,
\cite{Cesari_1983,Fleming_Rishel_1975} for existence results of optimal solutions).
Applying the Pontryagin maximum principle \cite{Pontryagin_et_all_1962}
we derive the optimal solution $(u^*, S_h^*, I_h^*, S_v^*, I_v^*)$
of the proposed optimal control problem (see the Appendix).

More generally, one could take the following cost function:
\begin{equation*}
J_2(u) = \int_0^{t_f} A_1 I_h(t) + A_2 I_v(t) + \frac{C}{2} u^2(t) \, dt ,
\end{equation*}
where $A_2$ is the weight constant on infectious mosquitoes (for numerical simulations we considered $A_2 = 25$).
It turns out that when we include in the objective function the number of infectious mosquitoes,
the distribution of the total host population $N_h$ and vector population $N_v$ by the categories
$S_h$, $I_h$ and $S_v$, $I_v$, respectively, is the same for both cost functions $J_1$ and $J_2$
(see Figures~\ref{fig:Sh:Ih:J1:J2} and \ref{fig:Sv:Iv:J1:J2}).
On the other hand, the effort on the control is higher for the cost function $J_2$
(see Figure~\ref{control:J1:J2}). Therefore, we choose to use the cost function $J_1$
in our numerical simulations (Section~\ref{sec:num:simu}).

\begin{figure}[!htb]
\centering
\subfloat[\footnotesize{Susceptible humans for $J_1$ and $J_2$.}]{\label{Sh:J1:J2}
\includegraphics[width=0.50\textwidth]{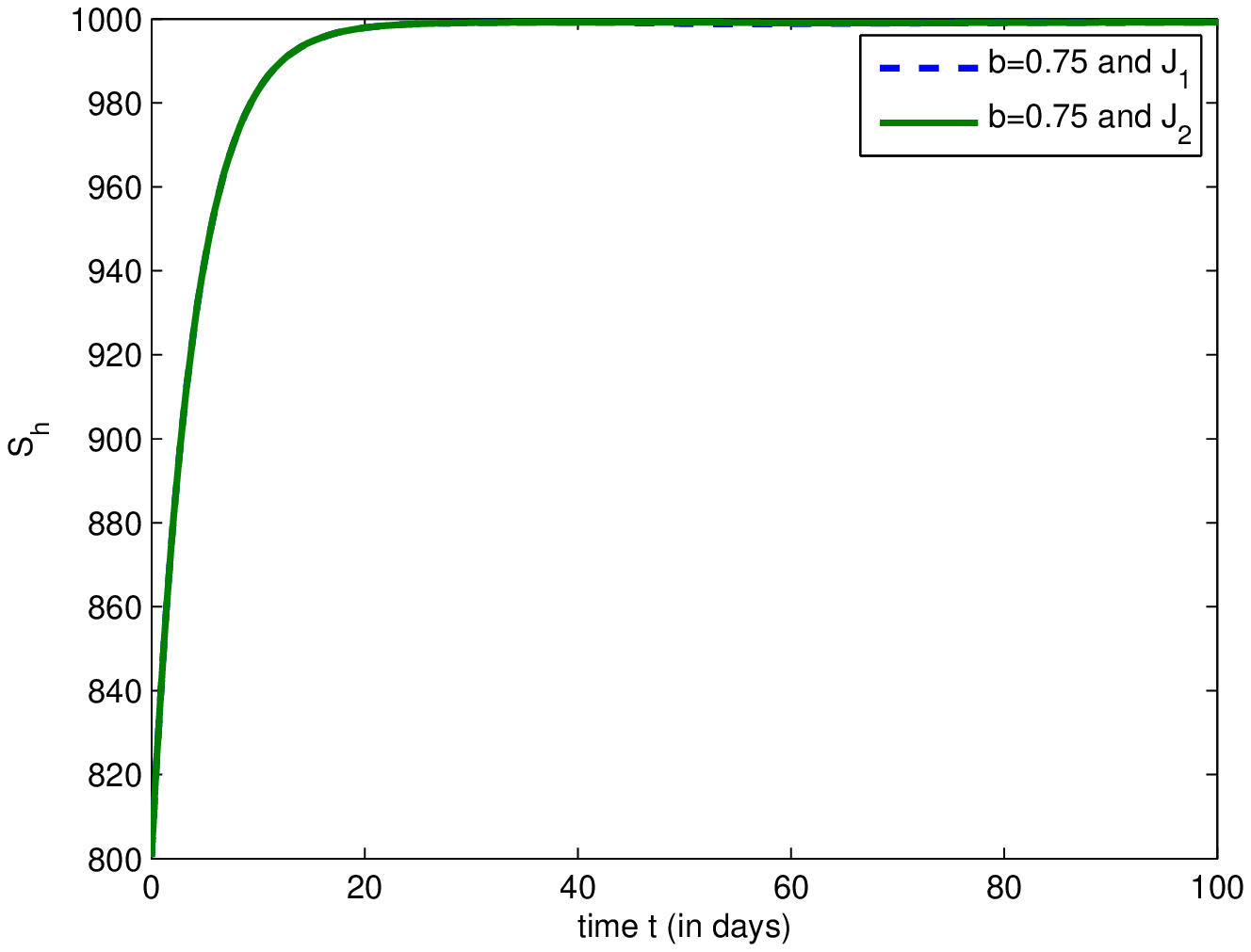}}
\subfloat[\footnotesize{Infectious humans for $J_1$ and $J_2$.}]{\label{Ih:J1:J2}
\includegraphics[width=0.50\textwidth]{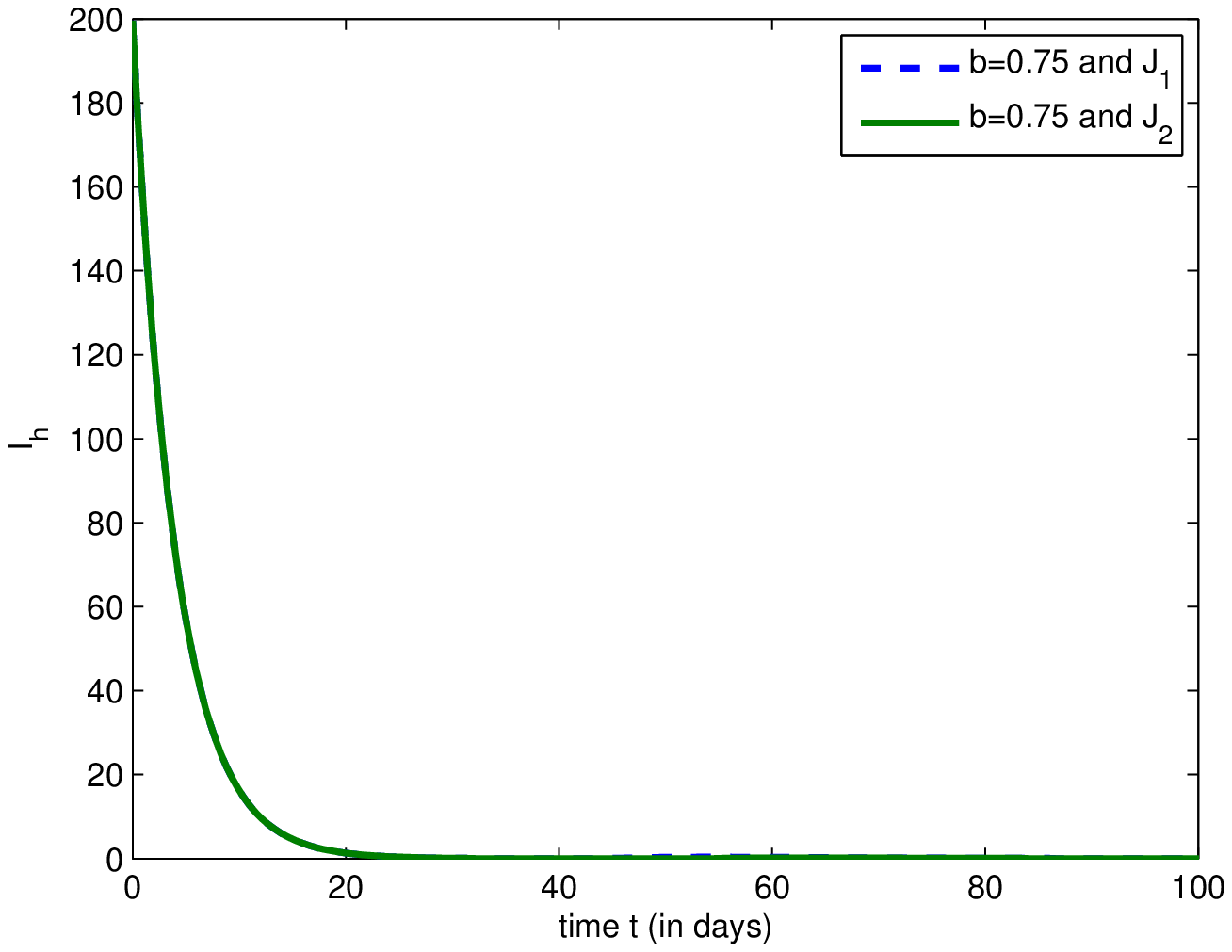}}
\caption{Susceptible and infectious individuals for different cost functions
$J_1$ and $J_2$ (parameter/constant values from Table~\ref{table:parameters} and $b=0.75$).}
\label{fig:Sh:Ih:J1:J2}
\end{figure}

\begin{figure}[!htb]
\centering
\subfloat[\footnotesize{Susceptible mosquitoes for $J_1$ and $J_2$.}]{\label{Sv:J1:J2}
\includegraphics[width=0.50\textwidth]{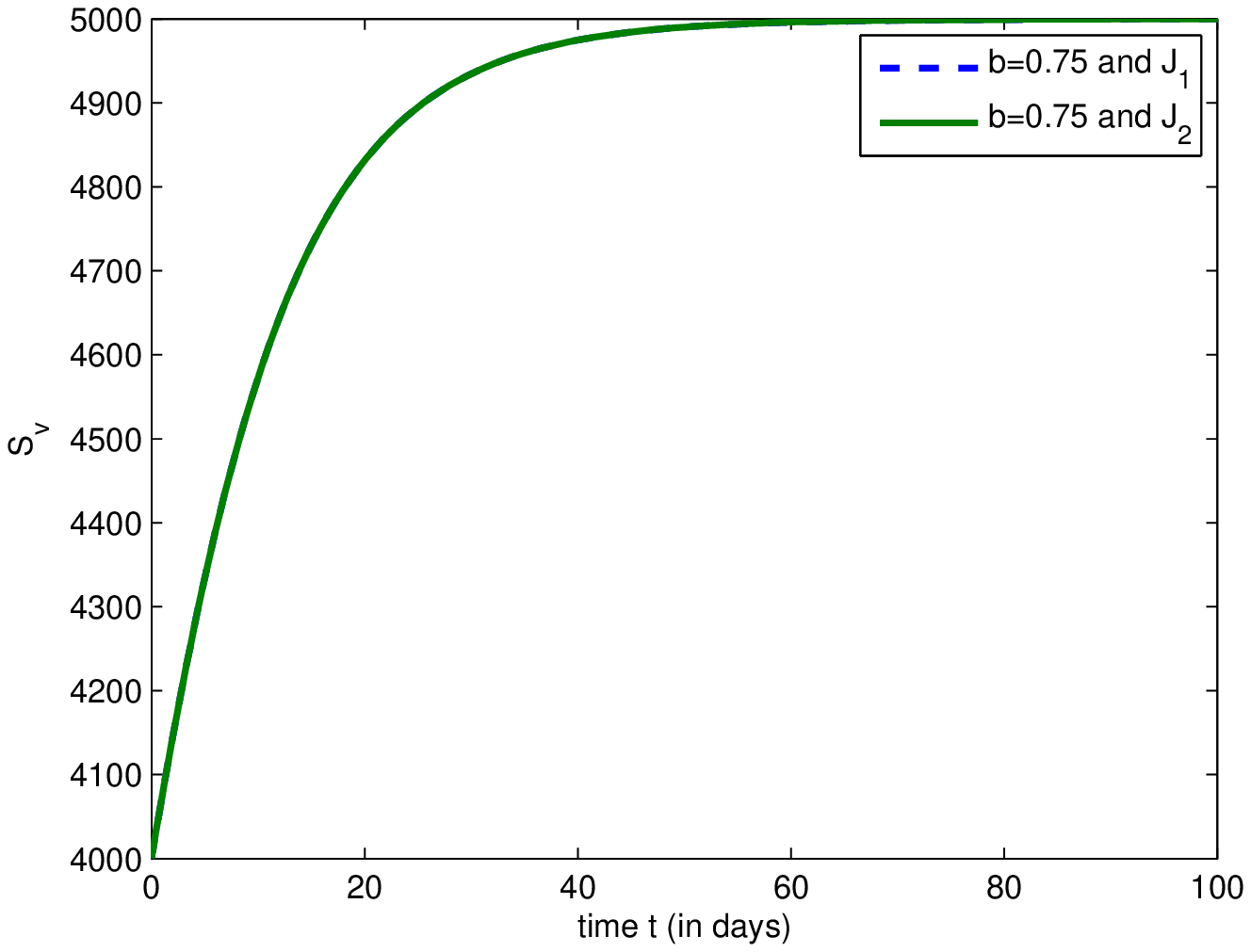}}
\subfloat[\footnotesize{Infectious mosquitoes for $J_1$ and $J_2$.}]{\label{Iv:J1:J2}
\includegraphics[width=0.50\textwidth]{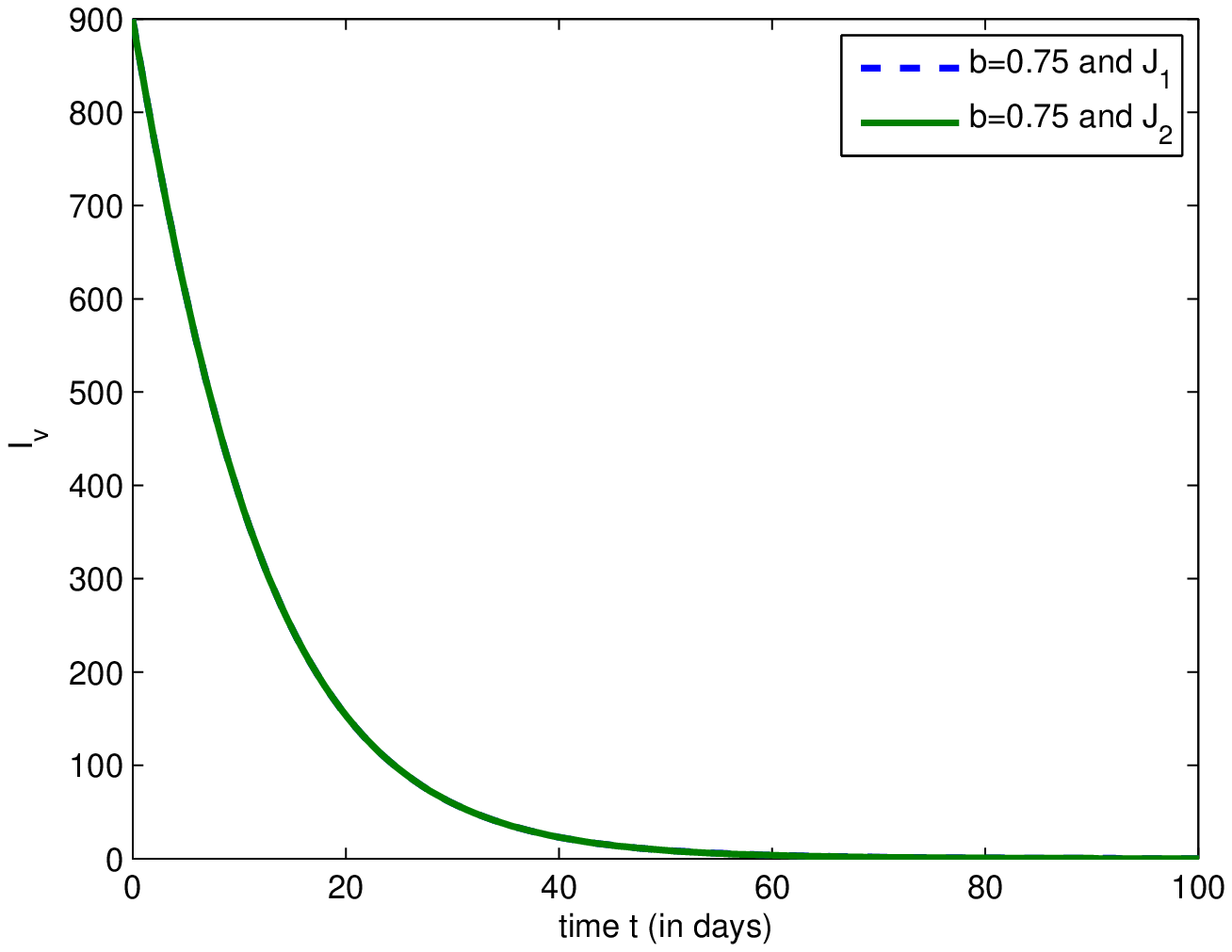}}
\caption{Susceptible and infectious mosquitoes for different cost functions $J_1$ and $J_2$
(parameter/constant values from Table~\ref{table:parameters} and $b=0.75$).}
\label{fig:Sv:Iv:J1:J2}
\end{figure}

\begin{figure}[!htb]
\centering
\includegraphics[width=0.5\textwidth]{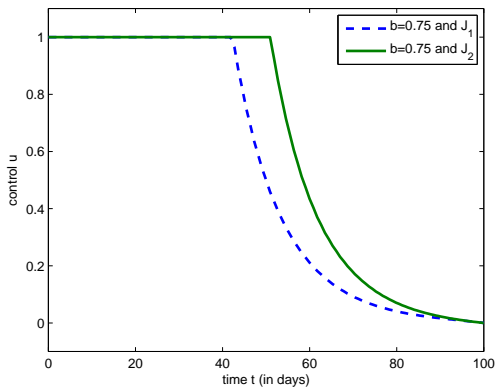}
\caption{Optimal control $u$ for different cost functions $J_1$ and $J_2$
(parameter/constant values from Table~\ref{table:parameters} and $b=0.75$).}
\label{control:J1:J2}
\end{figure}


\section{Numerical results and discussion}
\label{sec:num:simu}

Our numerical results were obtained and confirmed following
different approaches. The first approach consisted in using IPOPT \cite{IPOPT}
and the algebraic modeling language AMPL \cite{AMPL}.
In a second approach we used the PROPT
Matlab Optimal Control Software \cite{PROPT}.
The results were coincident and are easily confirmed
by the ones obtained using an iterative method
that consists in solving the system of eight ODEs given by
\eqref{model:malaria:controls} and \eqref{adjoint_function} in Appendix.
For that, one first solves system \eqref{model:malaria:controls}
with a guess for the control over the time interval
$[0, t_f]$ using a forward fourth-order Runge--Kutta scheme
and the transversality conditions \eqref{eq:trans:cond} in Appendix.
Then, system \eqref{adjoint_function} is solved by
a backward fourth-order Runge--Kutta scheme using the current
iteration solution of \eqref{model:malaria:controls}.
The controls are updated by using a convex combination of the previous controls
and the values from \eqref{optcontrols} (see Appendix).
The iterative method ends when the values of the approximations
at the previous iteration are close to the ones at the present iteration.
For details see \cite{SLenhart_2002,MyID:271}.

First of all we consider $b=0.75$ and show that when we apply the \emph{supervision}
control $u$, better results are obtained, that is, the number of infected humans
vanishes faster when compared to the case where no controls are used.
If the control intervention $u$ is applied, then the number of infectious individuals
vanishes after approximately 30 days. If no control is considered, then it takes
approximately 70 days to assure that there are no infectious humans
(see Figure~\ref{fig:Sh:Ih:b075} for the fraction of susceptible
and infectious humans and Figure~\ref{control:b075} for the optimal control).

\begin{figure}[!htb]
\centering
\subfloat[\footnotesize{Susceptible humans}]{\label{Sh:b075}
\includegraphics[width=0.50\textwidth]{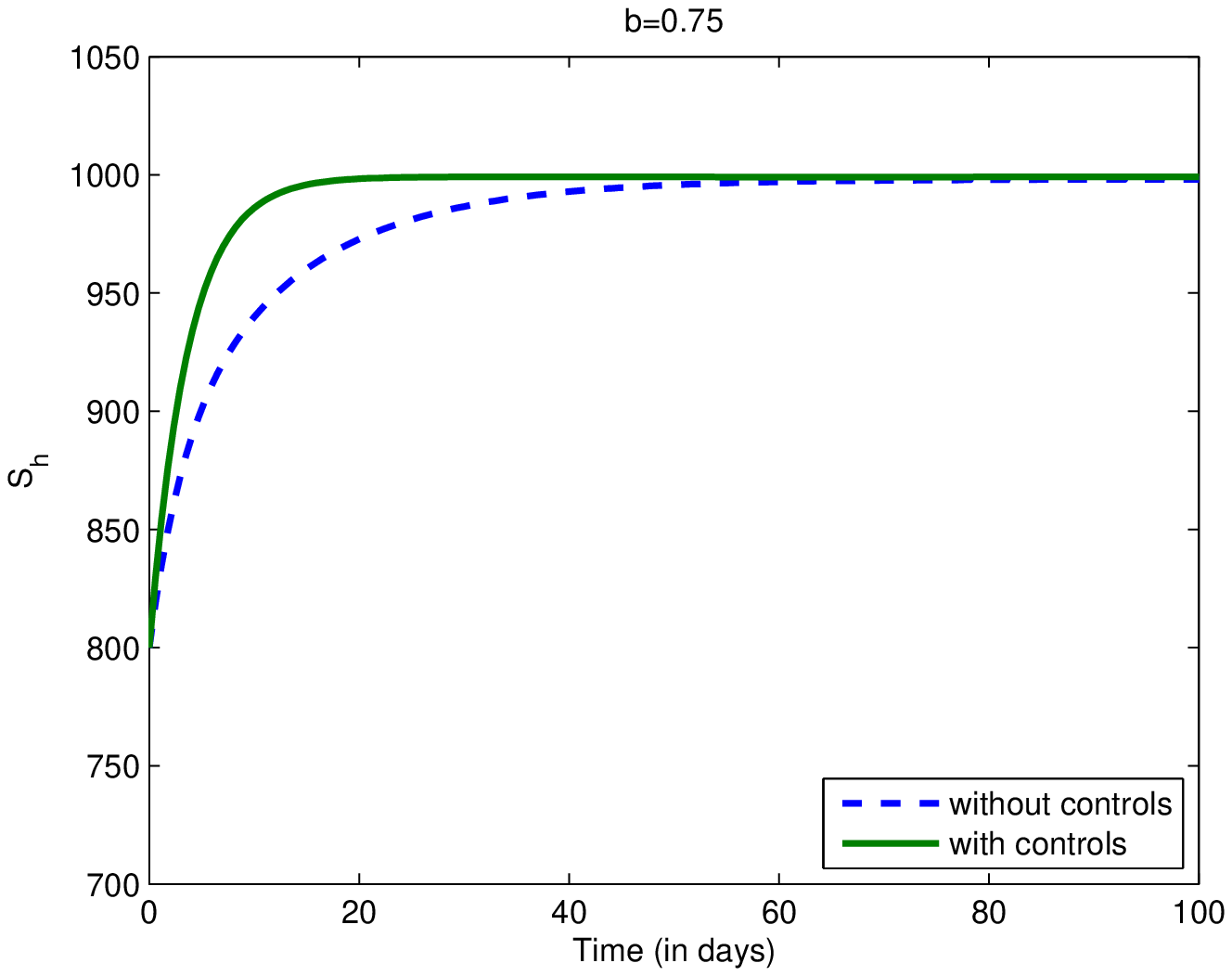}}
\subfloat[\footnotesize{Infectious humans}]{\label{Ih:075}
\includegraphics[width=0.50\textwidth]{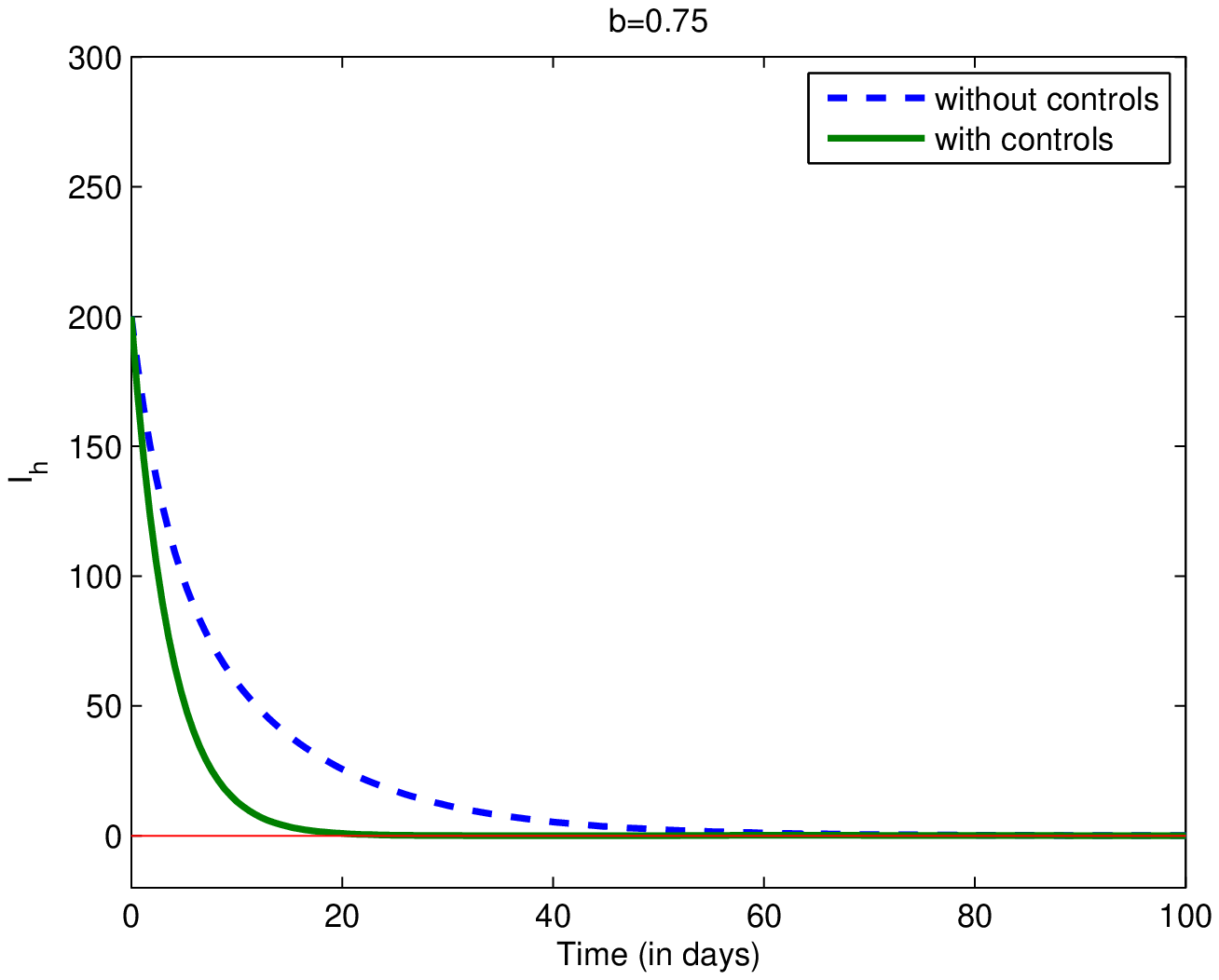}}
\caption{Susceptible and infectious humans for $b=0.75$ with and without control.}
\label{fig:Sh:Ih:b075}
\end{figure}

\begin{figure}[!htb]
\centering
\includegraphics[width=0.5\textwidth]{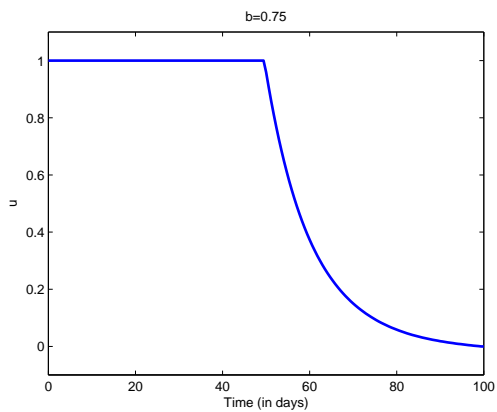}
\caption{Optimal control $u$ for $b=0.75$.}
\label{control:b075}
\end{figure}

For smaller proportions of ITN usage than $b=0.75$,
similar results on the reduction of infectious humans are attained
when we consider the optimal \emph{supervision} control $u$
(see Figures~\ref{fig:Sh:Ih:bvariable} and \ref{control:bvariable}).
We note that the control $u$ does not contribute significantly
for the decrease of $I_v$ (see Figure~\ref{fig:Sv:Iv:b075}).

\begin{figure}[!htb]
\centering
\subfloat[\footnotesize{Susceptible humans}]{\label{Sh:bvariable}
\includegraphics[width=0.50\textwidth]{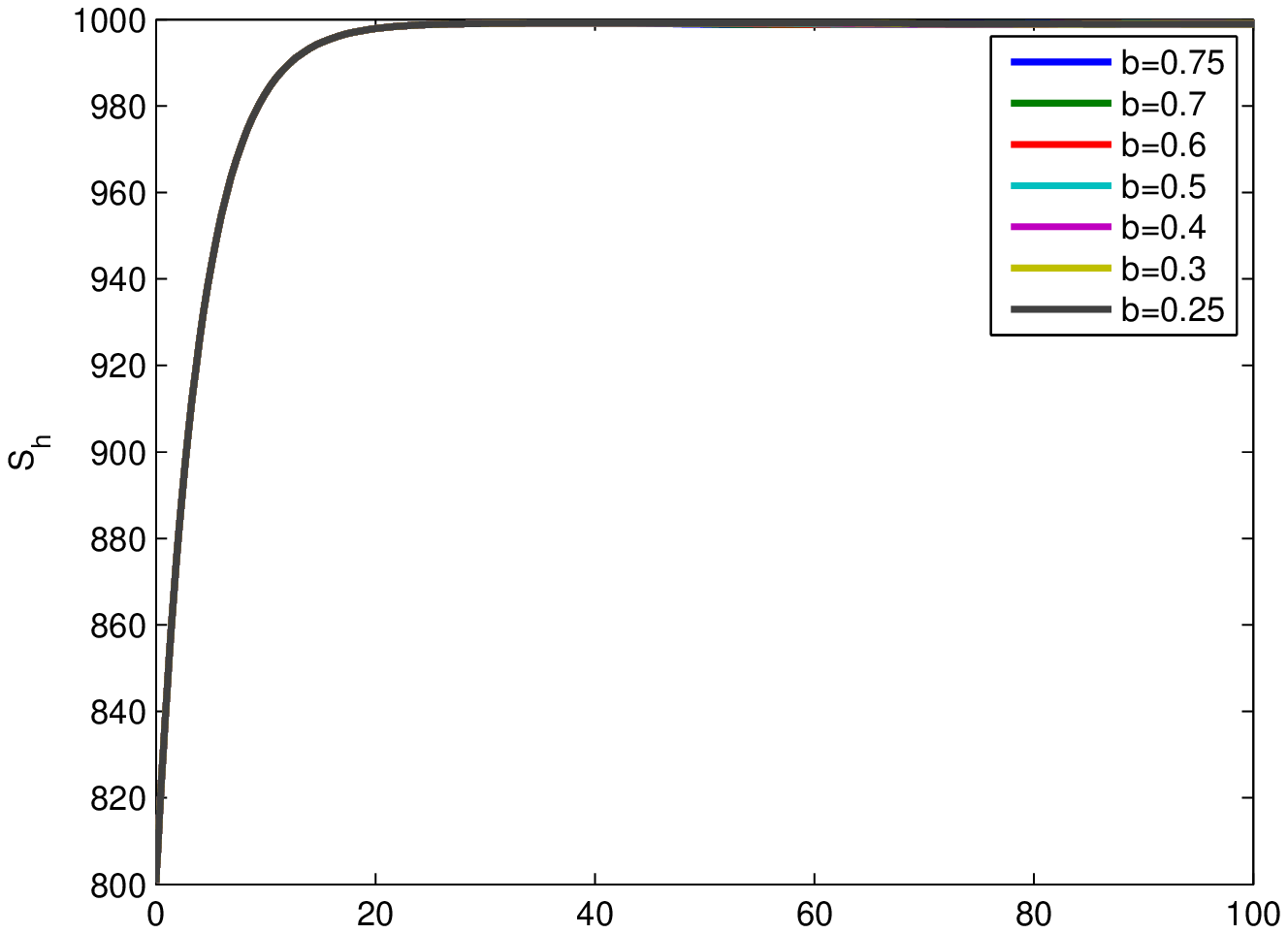}}
\subfloat[\footnotesize{Infectious humans}]{\label{Ih:bvariable}
\includegraphics[width=0.50\textwidth]{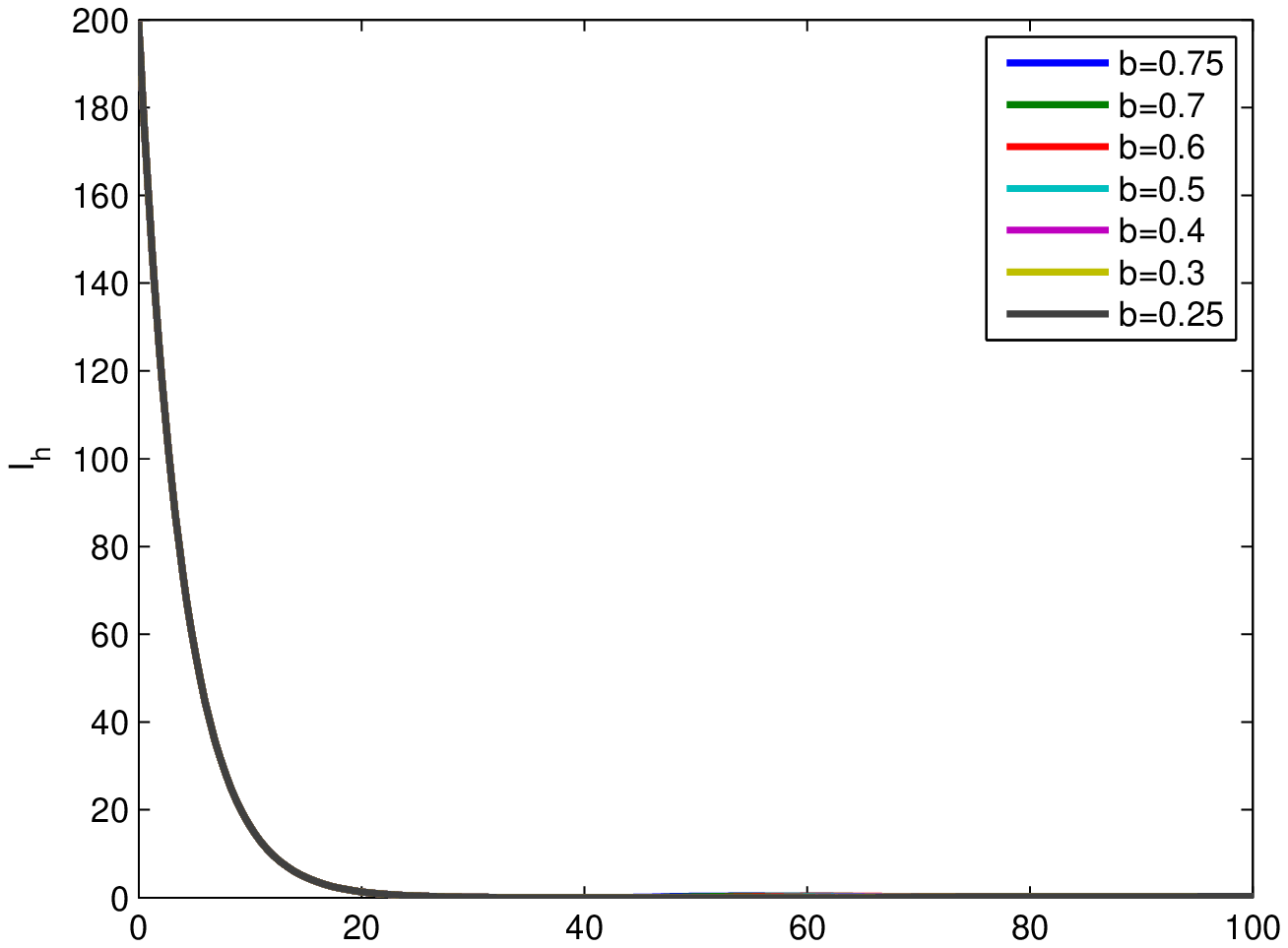}}
\caption{Susceptible and infectious humans for $b=0.25; 0.3; 0.4; 0.5; 0.6; 0.7; 0.75$.}
\label{fig:Sh:Ih:bvariable}
\end{figure}

\begin{figure}[!htb]
\centering
\includegraphics[width=0.5\textwidth]{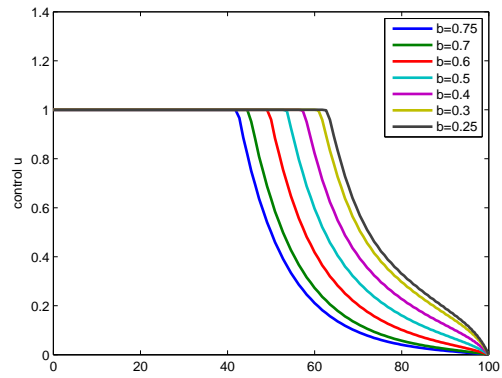}
\caption{Optimal control $u$ for $b=0.25; 0.3; 0.4; 0.5; 0.6; 0.7; 0.75$.}
\label{control:bvariable}
\end{figure}

\begin{figure}[!htb]
\centering
\subfloat[\footnotesize{Susceptible mosquitoes}]{\label{Sv:b075}
\includegraphics[width=0.50\textwidth]{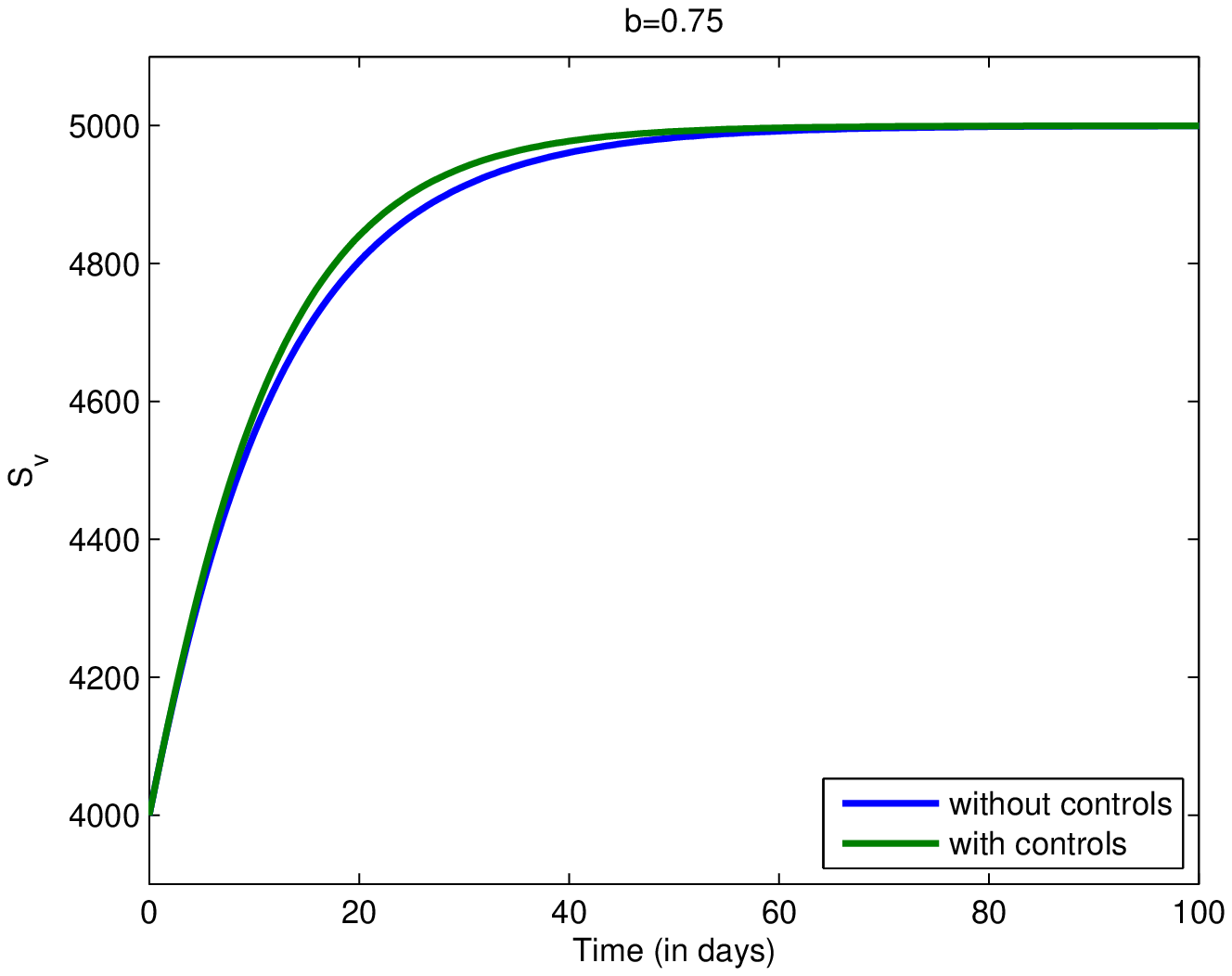}}
\subfloat[\footnotesize{Infectious mosquitoes}]{\label{Iv:075}
\includegraphics[width=0.50\textwidth]{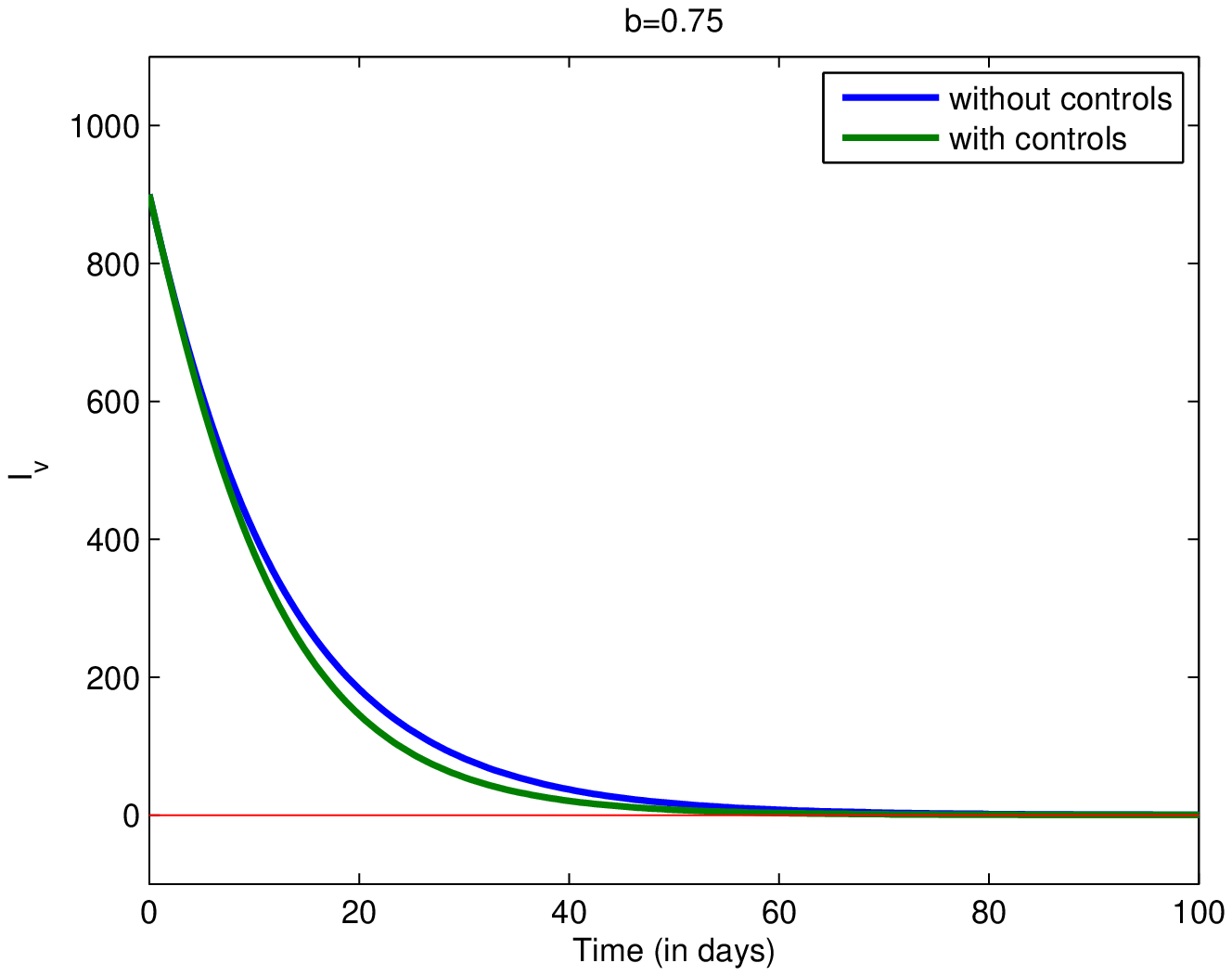}}
\caption{Susceptible and infectious mosquitoes for $b=0.75$ with and without control.}
\label{fig:Sv:Iv:b075}
\end{figure}


\section*{Appendix}

According to the Pontryagin Maximum Principle \cite{Pontryagin_et_all_1962},
if $u^*(\cdot) \in \Omega$ is optimal for the problem \eqref{model:malaria:controls},
\eqref{min:cost:function:malaria} with the initial conditions given
in Table~\ref{table:parameters} and fixed final time $t_f$, then there exists
a nontrivial absolutely continuous mapping $\lambda : [0, t_f] \to \mathbb{R}^4$,
$\lambda(t) = \left(\lambda_1(t), \lambda_2(t),
\lambda_3(t), \lambda_4(t)\right)$,
called \emph{adjoint vector}, such that
\begin{equation*}
\dot{S}_h = \frac{\partial H}{\partial \lambda_1} \, , \quad
\dot{I}_h= \frac{\partial H}{\partial \lambda_2} \, , \quad
\dot{S}_v= \frac{\partial H}{\partial \lambda_3} \, , \quad
\dot{I}_v = \frac{\partial H}{\partial \lambda_4}
\end{equation*}
and
\begin{equation}
\label{adjsystemPMP}
\dot{\lambda}_1 = -\frac{\partial H}{\partial S_h} \, , \quad
\dot{\lambda}_2 = -\frac{\partial H}{\partial I_h} \, , \quad
\dot{\lambda}_3 = -\frac{\partial H}{\partial S_v} \, , \quad
\dot{\lambda}_4 = -\frac{\partial H}{\partial I_v} \, ,
\end{equation}
where function $H$ defined by
\begin{equation*}
\begin{split}
H&= H(S_h, I_h, S_v, I_v, \lambda, u) \\
&=A_1 I_h + \frac{C}{2}u^2 \\
&\, \, + \lambda_1 \left(\Lambda_h - (1-u) \lambda_h  S_h  + \gamma_h I_h - \mu_h S_h \right)\\
&\, \, + \lambda_2 \left( (1-u) \lambda_h S_h - (\mu_h + \gamma_h + \delta_h)I_h\right)\\
&\, \, + \lambda_3 \left(\Lambda_v - \lambda_v S_v - \mu_{vb} S_v \right)\\
&\, \, + \lambda_4 \left(p_2 \lambda_v S_v - \mu_{vb} I_v \right)
\end{split}
\end{equation*}
is called the \emph{Hamiltonian}, and the minimization condition
\begin{equation}
\label{maxcondPMP}
\begin{split}
H(S_h^*(t), &I_h^*(t), S_v^*(t), I_v^*(t),
\lambda^*(t), u^*(t))\\
&= \min_{0 \leq u \leq 1}
H(S_h^*(t), I_h^*(t), S_v^*(t), I_v^*(t), \lambda^*(t), u)
\end{split}
\end{equation}
holds almost everywhere on $[0, t_f]$. Moreover, the transversality conditions
\begin{equation}
\label{eq:trans:cond}
\lambda_i(t_f) = 0, \quad
i =1,\ldots, 4 \, ,
\end{equation}
hold.

\begin{theorem}
\label{the:thm}
Problem \eqref{model:malaria:controls}, \eqref{min:cost:function:malaria}
with fixed initial conditions $S_h(0)$, $I_h(0)$, $S_v(0)$ and $I_v(0)$
and fixed final time $t_f$, admits an unique optimal solution
$\left(S_h^*(\cdot), I_h^*(\cdot), S_v^*(\cdot), I_v^*(\cdot)\right)$
associated to an optimal control $u^*(\cdot)$ on $[0, t_f]$.
Moreover, there exists adjoint functions $\lambda_1^*(\cdot)$, $\lambda_2^*(\cdot)$,
$\lambda_3^*(\cdot)$ and $\lambda_4^*(\cdot)$ such that
\begin{equation}
\label{adjoint_function}
\begin{cases}
\dot{\lambda^*_1}(t) =  \lambda^*_1(t) \left( (1-u^*(t))\lambda_h
+ \mu_h \right) - \lambda_2^*(t) \lambda_h(1-u^*(t)) \\[0.1 cm]
\dot{\lambda^*_2}(t) = - A_1 - \lambda_1^*(t) \gamma_h
+ \lambda_2^*(t)(\mu_h + \gamma_h + \delta_h)\\[0.1 cm]
\dot{\lambda^*_3}(t) =  \lambda_3^*(t)(\lambda_v + \mu_{vb})
- \lambda_4^*(t)(\lambda_v) ) \\[0.1 cm]
\dot{\lambda^*_4}(t) = \lambda_4^*(t) \mu_{vb} \, ,
\end{cases}
\end{equation}
with transversality conditions
\begin{equation*}
\lambda^*_i(t_f) = 0,
\quad i=1, \ldots, 4 \, .
\end{equation*}
Furthermore,
\begin{equation}
\label{optcontrols}
u^*(t) = \min \left\{ \max \left\{0, \frac{\lambda_h(b) S_h^*(t)
}{C} \left(\lambda^*_2(t) - \lambda^*_1(t)\right)\right\}, 1 \right\} \, .
\end{equation}
\end{theorem}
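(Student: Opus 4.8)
The statement bundles three assertions --- existence of an optimal control, the Pontryagin-type characterization of it, and uniqueness --- and I would treat them in that order. \emph{Existence.} I would first check the hypotheses of a standard existence theorem for optimal controls (see, e.g., \cite{Cesari_1983,Fleming_Rishel_1975}): the admissible set $\Omega$ is nonempty and its pointwise values fill the compact convex interval $[0,1]$; the right-hand side of \eqref{model:malaria:controls} is continuous, $C^1$ in the state, and \emph{affine} in $u$; and the integrand $A_1 I_h + \tfrac{C}{2} u^2$ of \eqref{cost:function:malaria:J1} is convex in $u$ (strictly, since $C>0$) and bounded below. The one point needing a small separate argument is a uniform a priori bound on the state: from $\dot{N}_h = \Lambda_h - \mu_h N_h - \delta_h I_h \leq \Lambda_h - \mu_h N_h$ and $\dot{N}_v = \Lambda_v - \mu_{vb} N_v$ one sees that $N_h$ and $N_v$ --- hence each of $S_h, I_h, S_v, I_v$ --- remain in a fixed compact set on $[0,t_f]$ for every $u\in\Omega$. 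Together these ingredients yield a minimizer $\left(S_h^*, I_h^*, S_v^*, I_v^*, u^*\right)$.

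\emph{Necessary conditions and characterization of $u^*$.} Next I would apply the Pontryagin Maximum Principle \cite{Pontryagin_et_all_1962} with the Hamiltonian $H$ written out in the Appendix. The relations $\dot{S}_h = \partial H/\partial\lambda_1, \ldots, \dot{I}_v = \partial H/\partial\lambda_4$ merely reproduce \eqref{model:malaria:controls}, while computing $\dot{\lambda}_i = -\partial H/\partial x_i$ for $x=(S_h,I_h,S_v,I_v)$, treating the forces of infection $\lambda_h,\lambda_v$ as the given coefficients of Section~\ref{sec:cont:model}, yields the adjoint system \eqref{adjoint_function} --- for instance $-\partial H/\partial S_h = \lambda_1\left((1-u)\lambda_h+\mu_h\right) - \lambda_2(1-u)\lambda_h$, and likewise for the other three components --- and since the terminal state is free and $t_f$ fixed the transversality conditions are $\lambda_i^*(t_f)=0$, as in \eqref{eq:trans:cond}. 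For the control, the $u$-dependent part of $H$ is $\tfrac{C}{2}u^2 + u\,\lambda_h S_h(\lambda_1-\lambda_2)$, a strictly convex quadratic (because $C>0$); its unconstrained minimizer solves $Cu + \lambda_h S_h(\lambda_1-\lambda_2)=0$, so projecting $\tfrac{\lambda_h S_h}{C}(\lambda_2-\lambda_1)$ onto $[0,1]$ via the minimization condition \eqref{maxcondPMP} gives exactly \eqref{optcontrols}. Note that $u^*$ is then continuous, being a composition of the continuous states and (absolutely continuous) adjoints with the Lipschitz truncation $\min\{\max\{0,\cdot\},1\}$.

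\emph{Uniqueness.} Finally, substituting \eqref{optcontrols} into \eqref{model:malaria:controls} and \eqref{adjoint_function} turns the optimality conditions into a forward--backward two-point boundary value problem (initial data for the states at $t=0$, terminal data for the adjoints at $t=t_f$) whose right-hand side is Lipschitz in $(S_h,I_h,S_v,I_v,\lambda_1,\ldots,\lambda_4)$ on the compact region isolated above --- here one uses that the $\lambda_i$ and $S_h$ are bounded and that $N_h$ stays bounded away from $0$. Taking two optimal solutions, subtracting the state equations (integrated forward from $t=0$) and the adjoint equations (integrated backward from $t=t_f$), and invoking Gronwall's inequality gives an estimate of the form $\|\Delta\|\leq c\,t_f\,\|\Delta\|$ for the difference $\Delta$; hence $\Delta\equiv 0$ when $t_f$ is small, and uniqueness on the full interval then follows by the usual continuation argument (cf. \cite{SLenhart_2002}). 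I expect this last step to be the main obstacle: it requires careful bookkeeping of the Lipschitz constants of the coupled forward--backward system and, strictly speaking, the smallness of $t_f$ (or of the a priori bounds) that makes the Gronwall estimate contractive.
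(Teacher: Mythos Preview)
Your proposal follows essentially the same three-step structure as the paper's proof --- existence via convexity of the integrand in $u$ and Lipschitz regularity of the state system (citing \cite{Cesari_1983,Fleming_Rishel_1975}), derivation of the adjoint system and control characterization from the Pontryagin Maximum Principle, and uniqueness via boundedness of states and adjoints together with a Lipschitz/Gronwall argument (citing \cite{SLenhart_2002}) --- so the approach is the same.  Your version is more explicit on two points the paper leaves implicit: the a~priori state bounds obtained from the $\dot N_h$, $\dot N_v$ equations, and the honest caveat that the Gronwall-based uniqueness argument in the style of \cite{SLenhart_2002} is really a small-$t_f$ result (the paper asserts uniqueness on all of $[0,t_f]$ by reference without addressing this, and your proposed ``continuation'' extension would in fact need an additional argument).
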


\begin{proof}
Existence of an optimal solution $\left(S_h^*, I_h^*, S_v^*, I_v^*\right)$
associated to an optimal control $u^*$ comes from the convexity
of the integrand of the cost function $J$ with respect
to the control $u$ and the Lipschitz property of the state system
with respect to state variables $\left(S_h, I_h, S_v, I_v\right)$
(see, \textrm{e.g.}, \cite{Cesari_1983,Fleming_Rishel_1975}).
System \eqref{adjoint_function} is derived from the Pontryagin maximum principle
(see \eqref{adjsystemPMP}, \cite{Pontryagin_et_all_1962}) and the optimal controls
\eqref{optcontrols} come from the minimization condition \eqref{maxcondPMP}.
The optimal control pair given by \eqref{optcontrols} is unique due
to the boundedness of the state and adjoint functions and the Lipschitz property
of systems \eqref{model:malaria:controls} and \eqref{adjoint_function}
(see, e.g., \cite{SLenhart_2002} and references cited therein).
\end{proof}


\section*{Acknowledgements}

This work was supported by FEDER funds through COMPETE
--- Operational Programme Factors of Competitiveness
(``Programa Operacional Factores de Competitividade'')
and by Portuguese funds through the
Portuguese Foundation for Science and Technology
(``FCT --- Funda\c{c}\~{a}o para a Ci\^{e}ncia e a Tecnologia''),
within project PEst-C/MAT/UI4106/2011
with COMPETE number FCOMP-01-0124-FEDER-022690.
Silva was also supported by FCT through the
post-doc fellowship SFRH/BPD/72061/2010/J003420E03G;
Torres by EU funding under the 7th Framework Programme
FP7-PEOPLE-2010-ITN, grant agreement number 264735-SADCO.


\small



\end{document}